\numberwithin{equation}{section}
\newcommand{\Z}{{\mathbb Z}}
\newtheorem{thm}{Theorem}
\newtheorem{lemma}{Lemma}
\title{Squares of Tribonacci numbers\thanks{AMS Classification: 11B37, 11B39, 65B10}}
\author[]{Kunle Adegoke \thanks{adegoke00@gmail.com}}
\affil{Department of Physics and Engineering Physics, \mbox{Obafemi Awolowo University}, 220005 Ile-Ife, Nigeria}
\begin{document}

\date{}

\maketitle

\begin{abstract}
\noindent We prove some identities for the squares of generalized Tribonacci numbers. Various summation identities involving these numbers are derived.
\end{abstract}
\tableofcontents
\section{Introduction}
For $r\ge 3$, the generalized Tribonacci numbers are defined by
\begin{equation}\label{eq.ffnh4ol}
\mathcal{T}_r=\mathcal{T}_{r-1}+\mathcal{T}_{r-2}+\mathcal{T}_{r-3}\,,
\end{equation}
with arbitrary initial values $\mathcal{T}_0$, $\mathcal{T}_1$ and $\mathcal{T}_2$.

\bigskip

By substituting $\mathcal{T}_{r-2}=\mathcal{T}_{r-1}-\mathcal{T}_{r-3}-\mathcal{T}_{r-4}$ into the recurrence relation~\eqref{eq.ffnh4ol}, a useful alternative recurrence relation is obtained for $r\ge 4$: 
\begin{equation}\label{eq.o6ns6qw}
\mathcal{T}_r=2\mathcal{T}_{r-1}-\mathcal{T}_{r-4}\,,
\end{equation}
with arbitrary initial values $\mathcal{T}_0$, $\mathcal{T}_1$, $\mathcal{T}_2$ and $\mathcal{T}_3$.

\bigskip

Extension of the definition of $\mathcal{T}_r$ to negative subscripts is provided by writing the recurrence relation~\eqref{eq.o6ns6qw} as
\begin{equation}
\mathcal{T}_{-r}=2\mathcal{T}_{-r+3}-\mathcal{T}_{-r+4}\,.
\end{equation}
When $\mathcal{T}_0=0$, $\mathcal{T}_1=1$ and $\mathcal{T}_2=1$, we have the Tribonacci sequence, denoted $\{T_r\}$, $r\in\Z$.

\bigskip

Our purpose in writing this note is to establish the following identities:
\[
\mathcal{T}_r^2  - 2\mathcal{T}{}_{r - 1}^2  - 3\mathcal{T}_{r - 2}^2  - 6\mathcal{T}_{r - 3}^2  + \mathcal{T}_{r - 4}^2  + \mathcal{T}_{r - 6}^2  = 0\,,
\]
\[
\mathcal{T}_{r + 2}^2  - 4\mathcal{T}_{r + 1}^2  + \mathcal{T}_r^2  + 14\mathcal{T}_{r - 2}^2  - 4\mathcal{T}_{r - 3}^2  - 2\mathcal{T}_{r - 4}^2  - 8\mathcal{T}_{r - 5}^2  + \mathcal{T}_{r - 6}^2  + \mathcal{T}_{r - 8}^2=0\,,
\]
\[
\mathcal{T}_{r + 3}^2  - 3\mathcal{T}_{r + 2}^2  - 4\mathcal{T}_r^2  + 2\mathcal{T}_{r - 1}^2  - 10\mathcal{T}_{r - 2}^2  - 4\mathcal{T}_{r - 3}^2  + \mathcal{T}_{r - 5}^2  + \mathcal{T}_{r - 6}^2  = 0\,,
\]
\[
\mathcal{T}_{r + 1}^2  - 4\mathcal{T}_r^2  + 2\mathcal{T}_{r - 2}^2  + 16\mathcal{T}_{r - 3}^2  + 4\mathcal{T}_{r - 4}^2  - 2\mathcal{T}_{r - 6}^2  - \mathcal{T}_{r - 7}^2  = 0
\]
and
\[
\mathcal{T}_{r + 2}^2  - 2\mathcal{T}_{r + 1}^2  - 2\mathcal{T}_r^2  - 8\mathcal{T}_{r - 1}^2  - 2\mathcal{T}_{r - 2}^2  - 6\mathcal{T}_{r - 3}^2  + 2\mathcal{T}_{r - 4}^2  + \mathcal{T}_{r - 6}^2  = 0\,.
\]
We will also develop the associated summation identities. Specifically, we shall evaluate
\[
\begin{split}
&\sum_{j = 0}^k {x^j \mathcal{T}_j^2 },\quad \sum_{j = 0}^k {T_{2j}^2 },\quad \sum_{j = 1}^k {T_{2j - 1}^2 },\quad \sum_{j = 0}^k {( - 1)^j T_{2j}^2 },\quad \sum_{j = 1}^k {( - 1)^{j - 1} T_{2j - 1}^2 }\,,\\
& \sum_{j = 0}^k {T_{4j}^2 },\quad\sum_{j = 1}^k {T_{4j - 3}^2 },\quad\sum_{j = 0}^k {T_{4j - 1}^2 },\quad\sum_{j = 0}^k {j\mathcal{T}_j^2 },\quad\sum_{j = 0}^k {j^2\mathcal{T}_j^2 }\text{ and } \sum_{j = 0}^\infty {x^j \mathcal{T}_j^2 }\,.
\end{split}
\]

\bigskip

Presently we derive some identities that we shall need.

\bigskip

Rearranging the identity~\eqref{eq.o6ns6qw} and squaring, we have
\begin{equation}\label{eq.pbljs54}
4\mathcal{T}_{r - 1} \mathcal{T}_r  = 4\mathcal{T}_{r - 1}^2  - \mathcal{T}_{r - 4}^2  + \mathcal{T}_r^2\,,
\end{equation}
\begin{equation}\label{eq.a41lrc2}
4\mathcal{T}_{r - 1} \mathcal{T}_{r - 4}  = 4\mathcal{T}_{r - 1}^2  + \mathcal{T}_{r - 4}^2  - \mathcal{T}_r^2
\end{equation}
and
\begin{equation}\label{eq.qjbtgy8}
2\mathcal{T}_r \mathcal{T}_{r - 4}  = 4\mathcal{T}_{r - 1}^2  - \mathcal{T}_{r - 4}^2  - \mathcal{T}_r^2\,.
\end{equation}
Rearranging identity~\eqref{eq.o6ns6qw} and multiplying through by $4T_{r-3}$ to obtain
\begin{equation}
8\mathcal{T}_{r - 1} \mathcal{T}_{r - 3}  = 4\mathcal{T}_r \mathcal{T}_{r - 3}  + 4\mathcal{T}_{r - 4} \mathcal{T}_{r - 3}\,,
\end{equation}
and using identities~\eqref{eq.pbljs54} and~\eqref{eq.a41lrc2} to resolve the right hand side gives
\begin{equation}\label{eq.y9lf0gg}
8\mathcal{T}_{r - 1} \mathcal{T}_{r - 3}  = 4\mathcal{T}_r^2  + 2\mathcal{T}_{r - 3}^2  - \mathcal{T}_{r + 1}^2  + 4\mathcal{T}_{r - 4}^2  - \mathcal{T}_{r - 7}^2\,.
\end{equation}

\bigskip

We also require the following results from a previous paper~\cite{adegoke18c}.
\begin{lemma}[Partial sum of an $n^{th}$ order sequence]\label{lemma.qm8k37h}
Let $\{X_j\}$ be any arbitrary sequence, where $X_j$, $j\in\Z$, satisfies a $n^{th}$~order recurrence relation $X_j=f_1X_{j-c_1}+f_2X_{j-c_2}+\cdots+f_nX_{j-c_n}=\sum_{m=1}^n f_mX_{j-c_m}$, where $f_1$, $f_2$, $\ldots$, $f_n$ are arbitrary non-vanishing complex functions, not dependent on $j$, and $c_1$, $c_2$, $\ldots$, $c_n$ are fixed integers. Then, the following summation identity holds for arbitrary $x$ and non-negative integer $k$ :
\[
\sum_{j = 0}^k {x^j X_j }  = \frac{{\sum_{m = 1}^n {\left\{ {x^{c_m } f_m \left( {\sum_{j = 1}^{c_m } {x^{ - j} X_{ - j} }  - \sum_{j = k - c_m  + 1}^k {x^j X_j } } \right)} \right\}} }}{{1 - \sum_{m = 1}^n {x^{c_m } f_m } }}\,.
\]

\end{lemma}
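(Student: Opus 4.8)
The plan is to evaluate $S := \sum_{j=0}^k x^j X_j$ by substituting the recurrence once into every term and then recognising the outcome as a linear equation in $S$. Since $X_j = \sum_{m=1}^n f_m X_{j-c_m}$ holds for every $j\in\Z$ and the $f_m$ are independent of $j$, I would first write
\[
S = \sum_{j = 0}^k x^j \sum_{m = 1}^n f_m X_{j - c_m} = \sum_{m = 1}^n f_m \sum_{j = 0}^k x^j X_{j - c_m}\,.
\]

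Next I would reindex each inner sum by $i = j - c_m$, extracting a factor $x^{c_m}$, to get $\sum_{j=0}^k x^j X_{j-c_m} = x^{c_m}\sum_{i=-c_m}^{k-c_m} x^i X_i$. The key observation is that the summation window $[-c_m,\,k-c_m]$ differs from the window $[0,k]$ of $S$ only by a block of $c_m$ terms at each end, so
\[
\sum_{i = -c_m}^{k - c_m} x^i X_i = \sum_{i = 0}^k x^i X_i + \sum_{i = -c_m}^{-1} x^i X_i - \sum_{i = k - c_m + 1}^k x^i X_i = S + \sum_{j = 1}^{c_m} x^{-j} X_{-j} - \sum_{j = k - c_m + 1}^k x^j X_j\,,
\]
where the last step substitutes $i=-j$ in the middle sum. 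Putting this back yields
\[
S = S\sum_{m = 1}^n x^{c_m} f_m + \sum_{m = 1}^n x^{c_m} f_m\left( \sum_{j = 1}^{c_m} x^{-j} X_{-j} - \sum_{j = k - c_m + 1}^k x^j X_j \right)\,,
\]
and collecting the terms proportional to $S$ and dividing by $1 - \sum_{m=1}^n x^{c_m} f_m$ gives the asserted formula.

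The computation is elementary; the one place that needs care is the boundary bookkeeping in the reindexing step — correctly identifying which $c_m$ terms are added at the bottom (the negative-index values $X_{-1},\dots,X_{-c_m}$, contributing $\sum_{j=1}^{c_m} x^{-j}X_{-j}$) and which are removed at the top ($X_{k-c_m+1},\dots,X_k$), and tracking the powers of $x$ through the shift. I would also note explicitly that the argument invokes the recurrence only for indices $0\le j\le k$, that the extension of $\{X_j\}$ to negative subscripts is precisely what makes the terms $X_{-j}$ in the final expression meaningful, and that the division requires $1-\sum_{m=1}^n x^{c_m} f_m\neq 0$, a condition satisfied for every specialisation of $x$ and the Tribonacci-type recurrences used in the sequel.
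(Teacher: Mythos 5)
Your proof is correct: the substitution of the recurrence, the reindexing $i=j-c_m$ with the factor $x^{c_m}$, and the window decomposition into the $c_m$ boundary terms at each end all check out, and solving the resulting linear equation for $S$ gives exactly the stated formula. Note that the paper itself does not prove this lemma --- it is imported without proof from the reference \cite{adegoke18c} --- so there is nothing to compare against; your argument is the standard derivation and matches what that reference does.
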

\begin{lemma}[Generating function]\label{lemma.v1j9biq}
Under the conditions of Lemma~\ref{lemma.qm8k37h}, if additionally $x^kX_k$ vanishes in the limit as $k$ approaches infinity, then
\[
S_\infty  (x) = \sum_{j = 0}^\infty  {x^j X_j }  = \frac{{\sum_{m = 1}^n {\left( {x^{c_m } f_m \sum_{j = 1}^{c_m } {x^{ - j} X_{ - j} } } \right)} }}{{1 - \sum_{m = 1}^n {x^{c_m } f_m } }}\,,
\]
so that $S_\infty(x)$ is a generating function for the sequence $\{X_j\}$.
\end{lemma}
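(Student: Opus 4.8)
The plan is to obtain Lemma~\ref{lemma.v1j9biq} simply as the $k\to\infty$ limit of the closed form in Lemma~\ref{lemma.qm8k37h}, so that beyond a short convergence argument for the tail terms no new computation is required.

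First I would fix $x$ (with $1-\sum_{m=1}^n x^{c_m}f_m\neq 0$, which is implicit throughout) and write the partial sum supplied by Lemma~\ref{lemma.qm8k37h} as $P_k:=\sum_{j=0}^k x^j X_j = N_k/D$, where $D:=1-\sum_{m=1}^n x^{c_m}f_m$ is independent of $k$ and
\[
N_k:=\sum_{m=1}^n x^{c_m}f_m\Bigl(\sum_{j=1}^{c_m}x^{-j}X_{-j}-\sum_{j=k-c_m+1}^k x^j X_j\Bigr).
\]
The only $k$-dependence sits in the inner sums $\sum_{j=k-c_m+1}^k x^j X_j$, each of which (taking the $c_m$, as usual, to be positive) is a sum of exactly $c_m$ terms of the shape $x^\ell X_\ell$ with $\ell\in\{k-c_m+1,\dots,k\}$.

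Next I would invoke the hypothesis that $x^\ell X_\ell\to 0$ as $\ell\to\infty$. For each fixed $m$ every index occurring in $\sum_{j=k-c_m+1}^k x^j X_j$ tends to infinity with $k$, and there are only $c_m$ of them, so this finite sum tends to $0$; adding the $n$ contributions gives $N_k\to N_\infty:=\sum_{m=1}^n x^{c_m}f_m\sum_{j=1}^{c_m}x^{-j}X_{-j}$. Since $D$ is a nonzero constant, $P_k=N_k/D\to N_\infty/D$; in particular the series $\sum_{j=0}^\infty x^j X_j$ converges and equals $N_\infty/D$, which is precisely the asserted expression for $S_\infty(x)$.

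There is no real obstacle here; the point worth making explicit is that the correction term in Lemma~\ref{lemma.qm8k37h} is a finite sum whose length $c_m$ does not depend on $k$, so term-by-term vanishing suffices and no uniform estimate is needed, and that this limiting argument also delivers the convergence of the series (which the bare hypothesis $x^kX_k\to 0$ does not by itself guarantee). A fully self-contained alternative would be to multiply the formal series $S_\infty(x)$ by $D$, reindex, and collapse $\sum_m x^{c_m}f_m\sum_{j\ge 0}x^j X_{j-c_m}$ using the recurrence $X_\ell=\sum_m f_m X_{\ell-c_m}$, leaving the boundary terms $x^{c_m}f_m\sum_{j=1}^{c_m}x^{-j}X_{-j}$; but that route presupposes the rearrangement is legitimate, so reading the result off as a limit of the partial-sum identity is the more economical path.
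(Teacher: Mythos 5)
Your proof is correct and is exactly the intended derivation: the paper imports this lemma (without proof) from \cite{adegoke18c} as the $k\to\infty$ limit of the partial-sum formula of Lemma~\ref{lemma.qm8k37h}, which is precisely your argument. Your explicit remarks --- that each tail sum has a fixed number $c_m$ of terms so termwise vanishing of $x^\ell X_\ell$ suffices, and that the limit of $P_k=N_k/D$ also establishes convergence of the series --- are the right points to make and complete the proof.
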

We note that a special case of Lemma~\ref{lemma.qm8k37h} was proved in~\cite{zeitlin64}.

\section{Squares of Tribonacci numbers}
\begin{thm}\label{thm.eo07gjo}
The following identities hold for any integer $r$:
\begin{equation}\label{eq.jesyg5e}
\mathcal{T}_r^2  - 2\mathcal{T}{}_{r - 1}^2  - 3\mathcal{T}_{r - 2}^2  - 6\mathcal{T}_{r - 3}^2  + \mathcal{T}_{r - 4}^2  + \mathcal{T}_{r - 6}^2  = 0\,,
\end{equation}
\begin{equation}\label{eq.ltwcc00}
\mathcal{T}_{r + 2}^2  - 4\mathcal{T}_{r + 1}^2  + \mathcal{T}_r^2  + 14\mathcal{T}_{r - 2}^2  - 4\mathcal{T}_{r - 3}^2  - 2\mathcal{T}_{r - 4}^2  - 8\mathcal{T}_{r - 5}^2  + \mathcal{T}_{r - 6}^2  + \mathcal{T}_{r - 8}^2=0\,,
\end{equation}
\begin{equation}\label{eq.pq5tubq}
\mathcal{T}_{r + 3}^2  - 3\mathcal{T}_{r + 2}^2  - 4\mathcal{T}_r^2  + 2\mathcal{T}_{r - 1}^2  - 10\mathcal{T}_{r - 2}^2  - 4\mathcal{T}_{r - 3}^2  + \mathcal{T}_{r - 5}^2  + \mathcal{T}_{r - 6}^2  = 0\,,
\end{equation}
\begin{equation}\label{eq.gd5w62t}
\mathcal{T}_{r + 1}^2  - 4\mathcal{T}_r^2  + 2\mathcal{T}_{r - 2}^2  + 16\mathcal{T}_{r - 3}^2  + 4\mathcal{T}_{r - 4}^2  - 2\mathcal{T}_{r - 6}^2  - \mathcal{T}_{r - 7}^2  = 0
\end{equation}
and
\begin{equation}\label{eq.k3ltayi}
\mathcal{T}_{r + 2}^2  - 2\mathcal{T}_{r + 1}^2  - 2\mathcal{T}_r^2  - 8\mathcal{T}_{r - 1}^2  - 2\mathcal{T}_{r - 2}^2  - 6\mathcal{T}_{r - 3}^2  + 2\mathcal{T}_{r - 4}^2  + \mathcal{T}_{r - 6}^2  = 0\,.
\end{equation}

\end{thm}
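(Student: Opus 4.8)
The plan is to reduce each identity to a purely algebraic statement about the terms $\mathcal{T}_{r-4},\dots,\mathcal{T}_r$ (or a similar window of four consecutive values) using the fourth-order recurrence \eqref{eq.o6ns6qw}, and then to verify that the resulting polynomial identity is forced by the three product formulas \eqref{eq.pbljs54}, \eqref{eq.a41lrc2}, \eqref{eq.qjbtgy8} together with \eqref{eq.y9lf0gg}. Concretely, every square $\mathcal{T}_{r+m}^2$ appearing in \eqref{eq.jesyg5e}--\eqref{eq.k3ltayi} can, by repeatedly applying \eqref{eq.o6ns6qw}, be expanded as a quadratic form in a fixed block of four consecutive Tribonacci numbers, say $u=\mathcal{T}_{r-4}$, $v=\mathcal{T}_{r-3}$, $w=\mathcal{T}_{r-2}$, $z=\mathcal{T}_{r-1}$ (with $\mathcal{T}_r=2z-u$, $\mathcal{T}_{r+1}=2\mathcal{T}_r-v=4z-2u-v$, and so on, and likewise $\mathcal{T}_{r-5},\mathcal{T}_{r-6},\dots$ solved backwards). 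Substituting these expressions, each claimed identity becomes an assertion that a certain symmetric bilinear combination of $u,v,w,z$ vanishes identically — i.e. the coefficients of $u^2,v^2,w^2,z^2,uv,uw,uz,vw,vz,wz$ must each be zero.

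First I would therefore set up the explicit expansions: write out $\mathcal{T}_{r+3},\mathcal{T}_{r+2},\mathcal{T}_{r+1},\mathcal{T}_r,\mathcal{T}_{r-5},\mathcal{T}_{r-6},\mathcal{T}_{r-7},\mathcal{T}_{r-8}$ as integer linear combinations of $u,v,w,z$ via \eqref{eq.o6ns6qw} and its backward form, square them, and collect. Second, I would substitute into the left-hand side of each of \eqref{eq.jesyg5e}--\eqref{eq.k3ltayi} and check that all ten quadratic-form coefficients cancel. Since this is a mechanical (if lengthy) linear-algebra check over $\Z$, it can alternatively be organized exactly as the excerpt already suggests: use \eqref{eq.pbljs54}, \eqref{eq.a41lrc2} and \eqref{eq.qjbtgy8} to rewrite the three cross-terms $\mathcal{T}_{r-1}\mathcal{T}_r$, $\mathcal{T}_{r-1}\mathcal{T}_{r-4}$, $\mathcal{T}_r\mathcal{T}_{r-4}$ (and \eqref{eq.y9lf0gg} for $\mathcal{T}_{r-1}\mathcal{T}_{r-3}$) in terms of squares, which is what ultimately lets one trade bilinear information for the square-only identities stated. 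In practice I would verify \eqref{eq.jesyg5e} first — it is the shortest and most symmetric, involving only the window $\mathcal{T}_{r-6},\dots,\mathcal{T}_r$ — and then obtain \eqref{eq.ltwcc00}--\eqref{eq.k3ltayi} either by the same direct expansion or by shifting the index in \eqref{eq.jesyg5e} and taking integer linear combinations of shifted copies, since all five live in the same $\Z$-module of "square identities" generated by the recurrence.

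There is also a clean structural argument worth recording: the sequence $(\mathcal{T}_j^2)_{j\in\Z}$ satisfies a linear recurrence with constant coefficients, because if $\lambda,\mu,\nu$ are the roots of the characteristic polynomial $t^3-t^2-t-1$ of \eqref{eq.ffnh4ol}, then $\mathcal{T}_j$ is a linear combination of $\lambda^j,\mu^j,\nu^j$, hence $\mathcal{T}_j^2$ is a linear combination of the six "products" $\lambda^{2j},\mu^{2j},\nu^{2j},(\lambda\mu)^j,(\lambda\nu)^j,(\mu\nu)^j$; consequently $\mathcal{T}_j^2$ satisfies the order-six recurrence whose characteristic polynomial is $\prod(t-\alpha\beta)$ over the (multiset of) pairs $\alpha,\beta\in\{\lambda,\mu,\nu\}$. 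Expanding that sextic using the elementary symmetric functions $\lambda+\mu+\nu=1$, $\lambda\mu+\lambda\nu+\mu\nu=-1$, $\lambda\mu\nu=1$ gives precisely the coefficient vector in \eqref{eq.jesyg5e}; the other four identities are then consequences of this order-six relation together with \eqref{eq.o6ns6qw}. The main obstacle is not conceptual but bookkeeping: the expansions of $\mathcal{T}_{r+3}^2$ and the $\mathcal{T}_{r-8}^2$ term in \eqref{eq.ltwcc00} generate long integer combinations, so the real work is to organize the coefficient cancellations transparently — which is exactly why the paper pre-computes \eqref{eq.pbljs54}--\eqref{eq.y9lf0gg} as building blocks.
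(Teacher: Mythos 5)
Your main computational plan has a genuine flaw: four consecutive values $u=\mathcal{T}_{r-4}$, $v=\mathcal{T}_{r-3}$, $w=\mathcal{T}_{r-2}$, $z=\mathcal{T}_{r-1}$ of a generalized Tribonacci sequence are \emph{not} independent --- they satisfy $z=u+v+w$ by \eqref{eq.ffnh4ol} --- so expanding everything via \eqref{eq.o6ns6qw} into a quadratic form in $u,v,w,z$ and demanding that all ten coefficients vanish is asking for too much, and that check would in fact fail. Concretely, the constant sequence $\mathcal{T}_j=c$ satisfies \eqref{eq.o6ns6qw} (whose characteristic polynomial $t^4-2t^3+1=(t-1)(t^3-t^2-t-1)$ has the extra root $1$) but violates \eqref{eq.jesyg5e}, since $(1-2-3-6+1+1)c^2=-8c^2\ne 0$; hence the quadratic form does not vanish identically in four free variables, only on the hyperplane $z=u+v+w$. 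The repair is easy: parametrize by \emph{three} consecutive values, which are genuinely free (they are the image of the arbitrary initial data $\mathcal{T}_0,\mathcal{T}_1,\mathcal{T}_2$ under an invertible linear map), or equivalently substitute $z=u+v+w$ before collecting; then the vanishing of the six coefficients of the resulting ternary quadratic form is a necessary and sufficient test.

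With that repair your proposal works, and your closing ``structural argument'' is a complete and genuinely different proof of \eqref{eq.jesyg5e}: $\mathcal{T}_j^2$ lies in the span of $\lambda^{2j},\mu^{2j},\nu^{2j},(\lambda\mu)^j,(\lambda\nu)^j,(\mu\nu)^j$, so it is annihilated by the sixth-order operator with characteristic polynomial $\prod(t-\alpha\beta)=(t^3-3t^2-t-1)(t^3+t^2+t-1)=t^6-2t^5-3t^4-6t^3+t^2+1$, whose coefficients are exactly those of \eqref{eq.jesyg5e}. The remaining identities do follow as you say, because their coefficient polynomials are multiples of this sextic (for instance the polynomial attached to \eqref{eq.ltwcc00} factors as the sextic times $t^4-2t^3+1$), and since the six products $\alpha\beta$ are pairwise distinct the sextic is the minimal universal annihilator, so this recovers all five. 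The paper takes neither route: it stays entirely elementary, squaring (or multiplying by a suitable $\mathcal{T}_{r-j}$) a rearranged form of the recurrence and eliminating each cross-product $\mathcal{T}_{r-1}\mathcal{T}_r$, $\mathcal{T}_{r-1}\mathcal{T}_{r-4}$, $\mathcal{T}_r\mathcal{T}_{r-4}$, $\mathcal{T}_{r-1}\mathcal{T}_{r-3}$ via \eqref{eq.pbljs54}--\eqref{eq.y9lf0gg}. That route requires a judicious, identity-by-identity choice of which bilinear relation to start from (which your sketch leaves unspecified), whereas your eigenvalue argument mechanizes the whole theorem at the cost of working over the splitting field.
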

\begin{proof}
To prove identity~\eqref{eq.jesyg5e}, write
\begin{equation}\label{eq.u5dlvn9}
\mathcal{T}_r-\mathcal{T}_{r-1}=\mathcal{T}_{r-2}+\mathcal{T}_{r-3}\,,
\end{equation}
square both sides and use the identity~\eqref{eq.pbljs54} to resolve the cross-products $\mathcal{T}_r \mathcal{T}_{r - 1} $ and $\mathcal{T}_{r-2} \mathcal{T}_{r - 3} $. 

\bigskip

Identity~\eqref{eq.ltwcc00} is proved by multiplying through identity~\eqref{eq.o6ns6qw} by $8\mathcal{T}_{r-2}$, obtaining $8\mathcal{T}_r \mathcal{T}_{r - 2}  = 16\mathcal{T}_{r - 1} \mathcal{T}_{r - 2}  - 8\mathcal{T}_{r - 4} \mathcal{T}_{r - 2} $ and using identities~\eqref{eq.pbljs54} and~\eqref{eq.y9lf0gg} to resolve the cross-products.

\bigskip

To prove identity~\eqref{eq.pq5tubq}, rearrange relation~\eqref{eq.u5dlvn9} as $\mathcal{T}_r-\mathcal{T}_{r-2}=\mathcal{T}_{r-1}+\mathcal{T}_{r-3}$, square both sides and use the identity~\eqref{eq.y9lf0gg} to resolve the cross-products. 

\bigskip

Identity~\eqref{eq.gd5w62t} is proved by rearranging relation~\eqref{eq.u5dlvn9} as $\mathcal{T}_r-\mathcal{T}_{r-3}=\mathcal{T}_{r-1}+\mathcal{T}_{r-2}$, multiplying through by $\mathcal{T}_{r-3}$ to obtain $\mathcal{T}_r\mathcal{T}_{r-3}-\mathcal{T}_{r-3}^2=\mathcal{T}_{r-1}\mathcal{T}_{r-3}+\mathcal{T}_{r-2}\mathcal{T}_{r-3}$ and using identities~\eqref{eq.pbljs54},~\eqref{eq.a41lrc2} and~\eqref{eq.y9lf0gg} to resolve the cross-products. 

\bigskip

To prove identity~\eqref{eq.k3ltayi} multiply through the recurrence relation~\eqref{eq.ffnh4ol} by $\mathcal{T}_r$ to obtain $\mathcal{T}_r^2-\mathcal{T}_r\mathcal{T}_{r-1}-\mathcal{T}_r\mathcal{T}_{r-2}-\mathcal{T}_r\mathcal{T}_{r-3}=0$. Now use the identities~\eqref{eq.pbljs54}, \eqref{eq.a41lrc2} and~\eqref{eq.y9lf0gg} to resolve the cross-products.

\end{proof}
\section{Partial sums}
From the identity~\eqref{eq.jesyg5e} of Theorem~\ref{thm.eo07gjo} and using $\{X_r\}\equiv\{\mathcal{T}^2_r\}$, $r\in\Z$, in Lemma~\ref{lemma.qm8k37h}, the next result follows.
\begin{thm}\label{thm.awteiu4}
The following identity holds for integer $k$ and arbitrary $x$:
\[
\begin{split}
&(1 - 3x - x^2  - x^3 )(1 + x + x^2  - x^3 )\sum_{j = 0}^k {x^j \mathcal{T}_j^2 }\\
&\qquad= (2 + 3x + 6x^2  - x^3  - x^5 )(\mathcal{T}_{ - 1}{}^2  - x^{k + 1} \mathcal{T}_k^2 )
\end{split}
\]
\[
\begin{split}
&\qquad\quad + (3 + 6x - x^2  - x^4 )(\mathcal{T}_{ - 2}{}^2  - x^{k + 1} \mathcal{T}_{k - 1}{}^2 )\\
&\qquad\quad + (6 - x - x^3 )(\mathcal{T}_{ - 3}{}^2  - x^{k + 1} \mathcal{T}_{k - 2}{}^2 )
\end{split}
\]
\[
\begin{split}
&\qquad\qquad\qquad- (1 + x^2 )(\mathcal{T}_{ - 4}{}^2  - x^{k + 1} \mathcal{T}_{k - 3}{}^2 )\\
&\qquad\qquad\qquad- x(\mathcal{T}_{ - 5}{}^2  - x^{k + 1} \mathcal{T}_{k - 4}{}^2 ) - (\mathcal{T}_{ - 6}{}^2  - x^{k + 1} \mathcal{T}_{k - 5}{}^2 )\,.
\end{split}
\]

\end{thm}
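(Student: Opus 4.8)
The plan is to apply Lemma~\ref{lemma.qm8k37h} verbatim to the sequence $X_j=\mathcal{T}_j^2$, $j\in\Z$. Rearranged, identity~\eqref{eq.jesyg5e} of Theorem~\ref{thm.eo07gjo} reads $X_j=2X_{j-1}+3X_{j-2}+6X_{j-3}-X_{j-4}-X_{j-6}$, which is precisely a recurrence of the form demanded by the lemma, with $n=5$, coefficients $(f_1,f_2,f_3,f_4,f_5)=(2,3,6,-1,-1)$ and offsets $(c_1,c_2,c_3,c_4,c_5)=(1,2,3,4,6)$. Substituting these data into the formula of Lemma~\ref{lemma.qm8k37h} hands us a closed form for $\sum_{j=0}^k x^j\mathcal{T}_j^2$ at once; everything after that is algebraic bookkeeping to bring it into the stated shape.

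First I would treat the denominator $1-\sum_{m=1}^5 x^{c_m}f_m=1-2x-3x^2-6x^3+x^4+x^6$ and verify, by direct expansion, that it factors as $(1-3x-x^2-x^3)(1+x+x^2-x^3)$; this is exactly the product multiplying $\sum_{j=0}^k x^j\mathcal{T}_j^2$ on the left of the asserted identity, so no clearing of denominators is actually needed---one simply multiplies the lemma's identity through by this polynomial.

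Next I would expand the numerator in two pieces. The initial-value piece $\sum_{m=1}^5 x^{c_m}f_m\sum_{j=1}^{c_m}x^{-j}X_{-j}$ is a finite double sum; the term $\mathcal{T}_{-i}{}^2$ occurs exactly for those $m$ with $c_m\ge i$, so its coefficient is $x^{-i}\sum_{m:\,c_m\ge i}x^{c_m}f_m$, which evaluates to $2+3x+6x^2-x^3-x^5$, $3+6x-x^2-x^4$, $6-x-x^3$, $-(1+x^2)$, $-x$, $-1$ for $i=1,\dots,6$. The tail piece $-\sum_{m=1}^5 x^{c_m}f_m\sum_{j=k-c_m+1}^k x^jX_j$ is organised the same way: the term $x^{k-i+1}X_{k-i+1}$ again collects contributions exactly from $m$ with $c_m\ge i$, with coefficient $-x^{k-i+1}\sum_{m:\,c_m\ge i}x^{c_m}f_m=-x^{k+1}\bigl(x^{-i}\sum_{m:\,c_m\ge i}x^{c_m}f_m\bigr)$, i.e.\ $-x^{k+1}$ times the very same polynomials. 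This is why the two pieces amalgamate into the differences $\mathcal{T}_{-i}{}^2-x^{k+1}\mathcal{T}_{k-i+1}{}^2$ printed in the theorem.

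The only real difficulty is organisational rather than conceptual: one must keep the bidirectional indexing ($X_{-j}$ against $X_{k-j+1}$) straight and respect the exact ranges of the inner sums. The one mildly non-obvious point is that the offset $c_5=6$ produces a $j=k-5$ summand even though there is no offset $5$, which is precisely why the coefficient of $\mathcal{T}_{k-4}{}^2$ (and of $\mathcal{T}_{-5}{}^2$) is the lone monomial $\pm x$. Once the coefficients are collected, comparison with the right-hand side of the theorem is immediate.
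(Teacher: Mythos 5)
Your proposal is correct and is exactly the paper's route: the paper derives Theorem~\ref{thm.awteiu4} by applying Lemma~\ref{lemma.qm8k37h} to $X_j=\mathcal{T}_j^2$ with the recurrence from identity~\eqref{eq.jesyg5e}, and your identification of $n=5$, $(f_m)=(2,3,6,-1,-1)$, $(c_m)=(1,2,3,4,6)$, the factorization of $1-2x-3x^2-6x^3+x^4+x^6$, and the coefficient bookkeeping all check out.
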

In particular,
\begin{equation}\label{eq.jct38sz}
\begin{split}
8\sum_{j = 0}^k { \mathcal{T}_j^2 } &= 9(\mathcal{T}_k^2  - \mathcal{T}_{ - 1}{}^2 ) + 7(\mathcal{T}_{k - 1}{}^2  - \mathcal{T}_{ - 2}{}^2 ) + 4(\mathcal{T}_{k - 2}{}^2  - \mathcal{T}_{ - 3}{}^2 )\\
&\qquad- 2(\mathcal{T}_{k - 3}{}^2  - \mathcal{T}_{ - 4}{}^2 ) - (\mathcal{T}_{k - 4}{}^2  - \mathcal{T}_{ - 5}{}^2 ) - (\mathcal{T}_{k - 5}{}^2  - \mathcal{T}_{ - 6}{}^2 )
\end{split}
\end{equation}
and
\begin{equation}
\begin{split}
8\sum_{j = 0}^k {(-1)^j \mathcal{T}_j^2 }&= 7(\mathcal{T}_{ - 1}{}^2  + (-1)^k \mathcal{T}_k^2 ) -5(\mathcal{T}_{ - 2}{}^2  + (-1)^k \mathcal{T}_{k - 1}{}^2 )\\
&\qquad + 8(\mathcal{T}_{ - 3}{}^2  + (-1)^k \mathcal{T}_{k - 2}{}^2 )- 2(\mathcal{T}_{ - 4}{}^2  + (-1)^k \mathcal{T}_{k - 3}{}^2 )\\
&\qquad\quad+(\mathcal{T}_{ - 5}{}^2 + (-1)^k \mathcal{T}_{k - 4}{}^2 ) - (\mathcal{T}_{ - 6}{}^2  + (-1)^k \mathcal{T}_{k - 5}{}^2 )\,.
\end{split}
\end{equation}
A variant of~\eqref{eq.jct38sz} was given by Maiorano~\cite{maiorano96}.

\bigskip

For the Tribonacci numbers, these results translate to:
\[
\begin{split}
&(1 - 3x - x^2  - x^3 )(1 + x + x^2  - x^3 )\sum_{j = 0}^k {x^j T_j^2 }\\
&\qquad= -(2 + 3x + 6x^2  - x^3  - x^5 )x^{k + 1} T_k^2 
\end{split}
\]
\begin{equation}\label{eq.im7vx8w}
\begin{split}
&\qquad\quad + (3 + 6x - x^2  - x^4 )(1 - x^{k + 1} T_{k - 1}{}^2 )\\
&\qquad\quad + (6 - x - x^3 )(1 - x^{k + 1} T_{k - 2}{}^2 )
\end{split}
\end{equation}
\[
\begin{split}
&\qquad\qquad\qquad+ (1 + x^2 ) x^{k + 1} T_{k - 3}{}^2 - x(4 - x^{k + 1} T_{k - 4}{}^2 )\\
&\qquad\qquad\qquad - (9 - x^{k + 1} T_{k - 5}{}^2 )\,,
\end{split}
\]
\begin{equation}\label{eq.i6m3xea}
\begin{split}
8\sum_{j = 0}^k {T_j^2 } &= 9T_k^2 + 7T_{k - 1}{}^2 + 4T_{k - 2}{}^2 - 2T_{k - 3}{}^2 - T_{k - 4}{}^2 - T_{k - 5}{}^2 + 2
\end{split}
\end{equation}
and
\begin{equation}\label{eq.g5a20e1}
8\sum_{j = 0}^k {( - 1)^j T_j^2 }  = ( - 1)^k \left(7T_k^2  - 5T_{k - 1}{}^2  + 8T_{k - 2}{}^2  - 2T_{k - 3}{}^2  + T_{k - 4}{}^2  - T_{k - 5}{}^2 \right) - 2\,.
\end{equation}
Shah~\cite{shah11} proved a shorter version of~\eqref{eq.i6m3xea} and reported a variant of same, attributed to Zeitlin~\cite{zeitlin67}.

\bigskip

Combining identity~\eqref{eq.i6m3xea} and identity~\eqref{eq.g5a20e1}, and using the summation identity
\begin{equation}\label{eq.sl69rgj}
\sum_{j = 0}^{2k} {f_j }  = \sum_{j = 0}^k {f_{2j} }  + \sum_{j = 1}^k {f_{2j - 1} }\,,
\end{equation}
we obtain
\begin{equation}\label{eq.ry26tbj}
8\sum_{j = 0}^k {T_{2j}^2 }  = 8T_{2k}^2  + T_{2k - 1}^2  + 6T_{2k - 2}^2  - 2T_{2k - 3}^2  - T_{2k - 5}^2
\end{equation}
and
\begin{equation}\label{eq.btzz0b5}
8\sum_{j = 1}^k {T_{2j - 1}^2 }  = T_{2k}^2  + 6T_{2k - 1}^2  - 2T_{2k - 2}^2  - T_{2k - 4}^2  + 2\,.
\end{equation}
Using $f_j=i^jT_j^2$ in identity~\eqref{eq.sl69rgj}, where $i=\sqrt {-1}$ is the imaginary unit, allows us to write
\begin{equation}\label{eq.o6nyqnz}
\sum_{j = 0}^k {( - 1)^j T_{2j}^2 }  = {\mathop{\rm Re}\nolimits} \sum_{j = 0}^{2k} {i^j T_{j}^2 },\quad \sum_{j = 1}^k {( - 1)^{j - 1} T_{2j-1}^2 }  = {\mathop{\rm Im}\nolimits} \sum_{j = 0}^{2k} {i^j T_{j}^2 }\,.
\end{equation}
Using $x=i$ in identity~\eqref{eq.im7vx8w} and taking note of identity~\eqref{eq.o6nyqnz}, we obtain the following alternating versions of identities~\eqref{eq.ry26tbj} and~\eqref{eq.btzz0b5}:
\begin{equation}\label{eq.ka363zb}
8\sum_{j = 0}^k {( - 1)^j T_{2j}^2 }  = ( - 1)^k \left( {7T_{2k}^2  + 3T_{2k - 1}^2  - 6T_{2k - 2}^2 - T_{2k - 4}^2  + T_{2k - 5}^2 } \right) + 2
\end{equation}
and
\begin{equation}\label{eq.vr0alb9}
8\sum_{j = 1}^k {( - 1)^{j - 1} T_{2j - 1}^2 }  = ( - 1)^k \left( {T_{2k}^2  - 9T_{2k - 1}^2  - 6T_{2k - 2}^2  + T_{2k - 4}^2  + T_{2k - 5}^2 } \right) + 2\,.
\end{equation}
Replacing $k$ with $2k$ in identities~\eqref{eq.ry26tbj} and~\eqref{eq.ka363zb} we find
\begin{equation}
16\sum_{j = 0}^k {T_{4j}^2 }  = 15T_{4k}^2  + 4T_{4k - 1}^2  - 2T_{4k - 3}^2  - T_{4k - 4}^2  + 2\,,
\end{equation}
by their addition, and
\begin{equation}
16\sum_{j = 1}^k {T_{4j - 2}^2 }  = T_{4k}^2  - 2T_{4k - 1}^2  + 12T_{4k - 2}^2  - 2T_{4k - 3}^2  + T_{4k - 4}^2  - 2T_{4k - 5}^2  - 2\,,
\end{equation}
by their subtraction.

\bigskip

Doing similar analysis with identities~\eqref{eq.btzz0b5} and~\eqref{eq.vr0alb9}, we find
\begin{equation}
16\sum_{j = 1}^k {T_{4j - 3}^2 }  = 2T_{4k}^2  - 3T_{4k - 1}^2  - 8T_{4k - 2}^2  + T_{4k - 5}^2  + 4
\end{equation}
and
\begin{equation}
16\sum_{j = 0}^k {T_{4j - 1}^2 }  = 15T_{4k - 1}^2  + 4T_{4k - 2}^2  - 2T_{4k - 4}^2  - T_{4k - 5}^2\,.
\end{equation}
Weighted sums of the form $\sum_{j = 0}^k {j^p\mathcal{T}_j^2 }$, $p$ a non-negative integer, can be evaluated by making $\sum_{j = 0}^k {x^j \mathcal{T}_j^2 }$ subject in the identity of Theorem~\ref{thm.awteiu4}, setting $x=e^y$, differentiating $p$ times with respect to $y$ and then setting $y=0$. The simplest of such evaluations are the following:
\[
8\sum_{j = 0}^k {j\mathcal{T}_j^2 }  = (9k - 2)\mathcal{T}_k^2  + 7(k - 1)\mathcal{T}_{k - 1}{}^2  + 4(k - 2)\mathcal{T}_{k - 2}{}^2
\]
\begin{equation}
\begin{split}
&\qquad - 2k\mathcal{T}_{k - 3}{}^2  - k\mathcal{T}_{k - 4}{}^2  - (k - 1)\mathcal{T}_{k - 5}{}^2\\
&\qquad\quad + 11\mathcal{T}_{ - 1}{}^2  + 14\mathcal{T}_{ - 2}{}^2  + 12\mathcal{T}_{ - 3}{}^2
\end{split}
\end{equation}
\[
\quad- 2\mathcal{T}_{ - 4}{}^2  - \mathcal{T}_{ - 5}{}^2  - 2\mathcal{T}_{ - 6}{}^2
\]
and
\[
\begin{split}
8\sum\limits_{j = 0}^k {j^2 \mathcal{T}_j^2 }  &= (9k^2  - 4k + 6)\mathcal{T}_k^2  + (7k{}^2  - 14k + 7)\mathcal{T}_{k - 1}{}^2\\
&\qquad+ (4k{}^2  - 16k + 10)\mathcal{T}_{k - 2}{}^2  - (2k{}^2  + 6)\mathcal{T}_{k - 3}{}^2
\end{split}
\]
\[
\qquad\quad\qquad- (k{}^2  + 2)\mathcal{T}_{k - 4}{}^2  - (k{}^2  - 2k + 3)\mathcal{T}_{k - 5}{}^2
\]
\begin{equation}
\begin{split}
&\qquad - 19\mathcal{T}_{ - 1}{}^2  - 28\mathcal{T}_{ - 2}{}^2  - 30\mathcal{T}_{ - 3}{}^2\\
&\quad\qquad + 8\mathcal{T}_{ - 4}{}^2  + 3\mathcal{T}_{ - 5}{}^2  + 6\mathcal{T}_{ - 6}{}^2\,.
\end{split}
\end{equation}
In particular,
\begin{equation}
\begin{split}
8\sum_{j = 0}^k {jT_j^2 }  &= (9k - 2)T_k^2  + 7(k - 1)T_{k - 1}{}^2  + 4(k - 2)T_{k - 2}{}^2\\
&\qquad - 2kT_{k - 3}{}^2  - kT_{k - 4}{}^2  - (k - 1)T_{k - 5}{}^2 + 4\\
\end{split}
\end{equation}
and
\[
\begin{split}
8\sum\limits_{j = 0}^k {j^2 T_j^2 }  &= (9k^2  - 4k + 6)T_k^2  + (7k^2  - 14k + 7)T_{k - 1}^2\\
&\qquad+ (4k^2  - 16k + 10)T_{k - 2}^2  - (2k^2  + 6)T_{k - 3}^2
\end{split}
\]
\[
\qquad\quad\qquad\qquad- (k^2  + 2)T_{k - 4}^2  - (k^2  - 2k + 3)T_{k - 5}^2 + 8\,.
\]
\section{Generating function}
\begin{thm}
Let $\mathcal{G}(x)=\sum_{j = 0}^\infty {x^j \mathcal{T}_j^2 }$ be the generating function of the squares of generalized Tribonacci numbers. $\mathcal{G}(x)$ is given by
\[
\begin{split}
&(1 - 3x - x^2  - x^3 )(1 + x + x^2  - x^3 )\mathcal{G}(x)\\
&= (2 + 3x + 6x^2  - x^3  - x^5 )\mathcal{T}_{ - 1}{}^2
\end{split}
\]
\[
\begin{split}
&\qquad\quad + (3 + 6x - x^2  - x^4 )\mathcal{T}_{ - 2}{}^2 + (6 - x - x^3 )\mathcal{T}_{ - 3}{}^2
\end{split}
\]
\[
\begin{split}
\qquad- (1 + x^2 )\mathcal{T}_{ - 4}{}^2 - x\mathcal{T}_{ - 5}{}^2 - \mathcal{T}_{ - 6}{}^2\,,
\end{split}
\]
provided $x^k \mathcal{T}_k^2 $ vanishes as $k$ approaches infinity.

\end{thm}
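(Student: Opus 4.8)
The plan is to apply Lemma~\ref{lemma.v1j9biq} directly, taking the sequence $\{X_j\}$ there to be $\{\mathcal{T}_j^2\}$, $j\in\Z$. Identity~\eqref{eq.jesyg5e} of Theorem~\ref{thm.eo07gjo} says precisely that this sequence obeys a linear recurrence with constant coefficients, namely $X_r = 2X_{r-1} + 3X_{r-2} + 6X_{r-3} - X_{r-4} - X_{r-6}$. In the notation of Lemmas~\ref{lemma.qm8k37h} and~\ref{lemma.v1j9biq} this is the case $n = 5$ with $(c_1,f_1) = (1,2)$, $(c_2,f_2) = (2,3)$, $(c_3,f_3) = (3,6)$, $(c_4,f_4) = (4,-1)$, $(c_5,f_5) = (6,-1)$; the absent $X_{r-5}$ term simply does not appear, so the non-vanishing requirement on the $f_m$ is satisfied. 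The stated vanishing of $x^k\mathcal{T}_k^2$ as $k\to\infty$ is exactly the extra hypothesis of Lemma~\ref{lemma.v1j9biq}, so the lemma applies and expresses $\mathcal{G}(x)$ as an explicit rational function of $x$ whose coefficients are built from $\mathcal{T}_{-1}^2,\dots,\mathcal{T}_{-6}^2$.

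It then remains only to put the output of Lemma~\ref{lemma.v1j9biq} into the displayed form. First I would compute the denominator $1 - \sum_{m=1}^5 x^{c_m} f_m = 1 - 2x - 3x^2 - 6x^3 + x^4 + x^6$ and verify the factorization $1 - 2x - 3x^2 - 6x^3 + x^4 + x^6 = (1 - 3x - x^2 - x^3)(1 + x + x^2 - x^3)$ by direct expansion. Next I would expand the numerator $\sum_{m=1}^5 x^{c_m} f_m \sum_{j=1}^{c_m} x^{-j} X_{-j}$ and collect the coefficient of each $\mathcal{T}_{-\ell}^2$: one finds the coefficient of $\mathcal{T}_{-1}^2$ is $2 + 3x + 6x^2 - x^3 - x^5$, that of $\mathcal{T}_{-2}^2$ is $3 + 6x - x^2 - x^4$, that of $\mathcal{T}_{-3}^2$ is $6 - x - x^3$, that of $\mathcal{T}_{-4}^2$ is $-(1 + x^2)$, that of $\mathcal{T}_{-5}^2$ is $-x$, and that of $\mathcal{T}_{-6}^2$ is $-1$, which is exactly the right-hand side of the claimed identity.

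An entirely equivalent route is to let $k\to\infty$ in Theorem~\ref{thm.awteiu4}: under the stated hypothesis each term $x^{k+1}\mathcal{T}_{k-i}^2$ with $0\le i\le 5$ tends to $0$, since $x^{k+1}\mathcal{T}_{k-i}^2 = x^{i+1}\,(x^{k-i}\mathcal{T}_{k-i}^2)$ and the second factor is the given null sequence with its index shifted by the fixed amount $i$; what survives of that identity is precisely the asserted formula. Either way there is no genuine obstacle here, the substance already residing in Theorem~\ref{thm.eo07gjo} and Lemma~\ref{lemma.v1j9biq}; the only thing that needs care is the polynomial bookkeeping — chiefly getting the factorization of the degree-six denominator correct and tracking signs through the numerator.
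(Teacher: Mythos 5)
Your proposal is correct and matches the paper's (implicit) argument exactly: the theorem is Lemma~\ref{lemma.v1j9biq} applied to the recurrence~\eqref{eq.jesyg5e} for $\{\mathcal{T}_j^2\}$, with the denominator factoring as $(1-3x-x^2-x^3)(1+x+x^2-x^3)$ and the numerator coefficients as you computed. Your observation that one may equivalently let $k\to\infty$ in Theorem~\ref{thm.awteiu4} is a valid and equally routine alternative.
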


In particular, we have that $G(x)$, the generating function of the squares of Tribonacci numbers, is given by
\begin{equation}
G(x)=\sum_{j = 0}^\infty  {x^j T_j^2 }  = \frac{{x(1-x-x^2-x^3)}}{{(1 - 3x - x^2  - x^3 )(1 + x + x^2  - x^3 )}}\,,
\end{equation}
valid for $x$ for which $x^kT_k^2 $ vanishes as $k$ approaches infinity.
\section{Cubes of Tribonacci numbers}
We end this note by giving an identity for the cubes of generalized Tribonacci numbers. The identity can be proved by expressing each $\mathcal{T}_{r-j}$, $j\in \{1,2,3,4,5,6,7,9\}$ as a linear combination of $\mathcal{T}_{r}$, $\mathcal{T}_{r-8}$ and $\mathcal{T}_{r-10}$ and substituting into the identity of Theorem~\ref{thm.zcb38b5}.
\begin{thm}\label{thm.zcb38b5}
The following identity holds for integer $r$:
\[
\begin{split}
&\mathcal{T}_r^3  - 4\mathcal{T}_{r - 1}^3  - 9\mathcal{T}_{r - 2}^3  - 34\mathcal{T}_{r - 3}^3  + 24\mathcal{T}_{r - 4}^3  - 2\mathcal{T}_{r - 5}^3\\
&\quad + 40\mathcal{T}_{r - 6}^3  - 14\mathcal{T}_{r - 7}^3  - \mathcal{T}_{r - 8}^3  - 2\mathcal{T}_{r - 9}^3  + \mathcal{T}_{r - 10}^3  = 0\,.
\end{split}
\]
\end{thm}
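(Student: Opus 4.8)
The plan is to exploit the fact that $\{\mathcal{T}_r\}$ satisfies the third-order recurrence~\eqref{eq.ffnh4ol}, so that for each integer $j$ the shifted sequence $\{\mathcal{T}_{r-j}\}$ lies in one and the same three-dimensional solution space. First I would use~\eqref{eq.ffnh4ol} repeatedly to express each of $\mathcal{T}_{r},\mathcal{T}_{r-1},\ldots,\mathcal{T}_{r-7}$ as an explicit integer combination of the three reference values $\mathcal{T}_{r-8},\mathcal{T}_{r-9},\mathcal{T}_{r-10}$; this is a short computation, and in particular it yields
\[
\mathcal{T}_r = 81\,\mathcal{T}_{r-8} + 68\,\mathcal{T}_{r-9} + 44\,\mathcal{T}_{r-10}.
\]
Since the coefficient $68$ of $\mathcal{T}_{r-9}$ is nonzero, this relation can be solved for $\mathcal{T}_{r-9}$ and the result back-substituted, producing for every $j\in\{1,2,3,4,5,6,7,9\}$ an identity
\[
\mathcal{T}_{r-j} = a_j\,\mathcal{T}_r + b_j\,\mathcal{T}_{r-8} + c_j\,\mathcal{T}_{r-10}
\]
with rational constants $a_j,b_j,c_j$ depending only on $j$. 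In other words $\{\mathcal{T}_r,\mathcal{T}_{r-8},\mathcal{T}_{r-10}\}$ is a basis of the solution space; the only structural fact requiring verification is the nonvanishing of that coefficient $68$, and, since~\eqref{eq.ffnh4ol} extends to negative subscripts, everything is valid for all integer $r$ and arbitrary initial values.

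Next I would cube each of these eight relations — together with the trivial cases $j=0,8,10$ — and substitute into the left-hand side $L$ of the claimed identity. After expansion, $L$ becomes a rational linear combination of the ten monomials $\mathcal{T}_r^{\,i}\mathcal{T}_{r-8}^{\,j}\mathcal{T}_{r-10}^{\,k}$ with $i+j+k=3$, and the theorem asserts precisely that all ten of these coefficients vanish. It is worth recording why an identity of this exact shape must exist: the substitution sends the $11$-dimensional space of coefficient vectors $(d_0,\ldots,d_{10})$ attached to the expressions $\sum_{m=0}^{10}d_m\mathcal{T}_{r-m}^3$ into a $10$-dimensional space (one dimension per monomial), so its kernel has dimension at least one; Theorem~\ref{thm.zcb38b5} merely exhibits a concrete element of that kernel, namely the vector $(1,-4,-9,-34,24,-2,40,-14,-1,-2,1)$, and the proof is the routine check that this vector maps to zero.

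The hard part is purely the bulk of the algebra: expanding eight cubes of trinomials and collecting ten coefficients by hand is long and error-prone, so this verification is in practice best delegated to a computer algebra system; conceptually nothing beyond the observation $68\neq 0$ is involved. As a cross-check, and as an independent proof, one may instead pass to the Binet form $\mathcal{T}_r = A\alpha^r + B\beta^r + C\gamma^r$, where $\alpha,\beta,\gamma$ are the roots of $t^3-t^2-t-1$. Then $\mathcal{T}_r^3$ is a linear combination of the ten powers $(\alpha^i\beta^j\gamma^k)^r$ with $i+j+k=3$, hence is annihilated by the shift operator with characteristic polynomial $\prod_{i+j+k=3}\bigl(t-\alpha^i\beta^j\gamma^k\bigr)$, and using $\alpha+\beta+\gamma=1$, $\alpha\beta+\beta\gamma+\gamma\alpha=-1$, $\alpha\beta\gamma=1$ one identifies this degree-ten polynomial with $t^{10}-4t^9-9t^8-34t^7+24t^6-2t^5+40t^4-14t^3-t^2-2t+1$; for instance its first two coefficients are $1$ and $-h_3(\alpha,\beta,\gamma)=-4$ and its constant term is $(\alpha\beta\gamma)^{10}=1$, in agreement with the stated identity.
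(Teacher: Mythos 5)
Your main argument---expressing each $\mathcal{T}_{r-j}$, $j\in\{1,2,3,4,5,6,7,9\}$, as a linear combination of $\mathcal{T}_r$, $\mathcal{T}_{r-8}$ and $\mathcal{T}_{r-10}$ (via $\mathcal{T}_r=81\mathcal{T}_{r-8}+68\mathcal{T}_{r-9}+44\mathcal{T}_{r-10}$, which is correct), cubing, substituting into the left-hand side and checking that the ten resulting coefficients vanish---is exactly the proof the paper sketches, and your proposal is correct, indeed more explicit than the paper about why such a relation exists and what must be verified. The dimension count and the Binet/characteristic-polynomial cross-check are sound bonuses but do not change the route.
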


\end{document}